\documentclass[11pt,reqno,a4paper]{amsart}

\usepackage{a4wide}

\usepackage[T1]{fontenc}
\usepackage[utf8]{inputenc}
\usepackage{lmodern}
\usepackage{microtype}
\usepackage{amsmath,amssymb,amsthm,mathtools}
\usepackage{url}
\usepackage[colorlinks=true,linkcolor=blue,citecolor=blue,urlcolor=blue]{hyperref}

\numberwithin{equation}{section}

\newtheorem{theorem}{Theorem}[section]
\newtheorem{lemma}[theorem]{Lemma}
\newtheorem{proposition}[theorem]{Proposition}
\newtheorem{corollary}[theorem]{Corollary}
\newtheorem*{thmA}{Theorem A}
\newtheorem*{thmB}{Theorem B}
\theoremstyle{definition}

\newtheorem{question}[theorem]{Question}
\theoremstyle{remark}
\newtheorem{remark}[theorem]{Remark}

\newcommand{\pc}{\pi\chi}
\newcommand{\pcl}{\pi\chi_{\mathrm l}}
\newcommand{\pcc}{\pi\chi_{\mathrm c}}
\newcommand{\pcd}{\pi\chi_{\mathrm d}}
\newcommand{\FU}{\mathcal F_U}
\newcommand{\G}{\mathcal G}
\newcommand{\Fsum}{\mathcal H}
\newcommand{\dcup}{\mathbin{\dot\cup}}
\newcommand{\modfin}{\subseteq^{*}}
\newcommand{\nmodfin}{\mathrel{\nsubseteq^{*}}}

\title[Three local \texorpdfstring{$\pi$}{pi}-characters]{Three local \texorpdfstring{$\pi$}{pi}-characters}

\hypersetup{
  pdftitle={Three local pi-characters},
  pdfauthor={Tomasz Kania},
  pdfsubject={Linked, centred, and decreasing local pi-characters}
}

\author[T. Kania]{Tomasz Kania}
\address[T.~Kania]{Mathematical Institute\\Czech Academy of Sciences\\\v Zitn\'a 25 \\115 67 Praha 1\\Czech Republic  and  Institute of Mathematics and Computer Science\\ Jagiellonian University\\ {\L}ojasiewicza 6, 30-348 Krak\'{o}w, Poland
}
\email{kania@math.cas.cz, tomasz.marcin.kania@gmail.com}
\thanks{RVO: 67985840.}

\subjclass[2020]{Primary 54A25, 54A35; Secondary 03E17, 54D10}
\keywords{local \(\pi\)-base; linked local \(\pi\)-base; centred local \(\pi\)-base; decreasing local \(\pi\)-base; one-point filter space; pseudointersection number}

\date{July 30, 2026}

\begin{document}

\begin{abstract}
We answer in ZFC a question of N.~A. Carlson by constructing a
zero-dimensional Tychonoff space \(X\), with exactly one non-isolated point,
for which every point admits a~decreasing local \(\pi\)-base and
\[
        \pc(X)=\pcl(X)=\omega<\pcc(X)=\omega_1<\pcd(X)=\omega_2.
\]
A preliminary chain-refinement lemma shows that every inclusion-chain of
subsets of a set has a reverse-well-ordered coinitial subchain of controlled
cardinality.  Thus the result is independent of whether a decreasing local
\(\pi\)-base is understood as an inclusion-chain, a strictly decreasing
transfinite sequence, or a weakly decreasing transfinite sequence.  The
construction combines a countable star filter,
whose centred local \(\pi\)-character is at least the pseudointersection
number \(\mathfrak p\), with a matrix filter that gives the exact values
\(\omega_1\) and \(\omega_2\).
\end{abstract}

\maketitle

\section{Introduction}

A local \(\pi\)-base at a point \(p\) of a space \(X\) is a family of
non-empty open subsets of \(X\) such that every neighbourhood of \(p\)
contains one of its members.  Unlike a neighbourhood base, its members need
not contain \(p\).  Local \(\pi\)-bases are a standard tool in the theory of
cardinal functions; see, for example, \cite{Juhasz} and the recent survey
\cite{CarlsonSurvey}.

Carlson \cite{CarlsonCentered} considers three refinements.  A local
\(\pi\)-base is \emph{linked} if any two of its members intersect and
\emph{centred} if every finite subfamily has non-empty intersection.  He also
defines the decreasing local \(\pi\)-character without imposing an explicit
indexing convention.  We initially interpret a decreasing local
\(\pi\)-base as an inclusion-chain.  Theorem~B shows that this agrees with
both the strict and the weak transfinite-sequence conventions.

We denote the ordinary local \(\pi\)-character by \(\pc(p,X)\).  We write
\(\pcl(p,X)\), \(\pcc(p,X)\), and \(\pcd(p,X)\) for the least
infinite cardinalities of linked, centred, and decreasing local
\(\pi\)-bases at \(p\), respectively.  Thus a finite witness gives value
\(\omega\).  The first two invariants are always defined: the family of all open
neighbourhoods of \(p\) is itself a centred local \(\pi\)-base.  By
contrast, \(\pcd(p,X)\) need not be defined.  We write \(\pcd(X)\) only
when every point of
\(X\) admits a decreasing local \(\pi\)-base, and then take the supremum of
the pointwise invariants, as for \(\pcl(X)\) and \(\pcc(X)\).  Whenever the
last invariant is defined,
\[
        \pc(X)\leqslant\pcl(X)\leqslant\pcc(X)\leqslant\pcd(X).
\]
Carlson concludes his paper with the following question:
\begin{center}
``Is there an example of a Hausdorff space \(X\) such that
\(\pcl(X)<\pcc(X)<\pcd(X)\)?''
\end{center}
This is Question~5.4 of \cite{CarlsonCentered}; in particular, it concerns
the global invariants.

Our main result gives an unconditional answer and produces a space with only
one non-isolated point.

\begin{thmA}
In ZFC there is a zero-dimensional Tychonoff space \(X\) of cardinality
\(\omega_2\), with exactly one non-isolated point, such that every point of
\(X\) admits a decreasing local \(\pi\)-base and
\[
        \pc(X)=\pcl(X)=\omega<\pcc(X)=\omega_1<\pcd(X)=\omega_2.
\]
Consequently, Carlson's Question~5.4 has a positive answer in ZFC.
\end{thmA}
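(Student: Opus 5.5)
The plan is to realise \(X\) as a one-point filter space. Let \(D\) be a discrete set of size \(\omega_2\) and \(\mathcal F\) a free filter on \(D\). Put \(X=D\cup\{\infty\}\), where the neighbourhoods of \(\infty\) are the sets \(\{\infty\}\cup F\) with \(F\in\mathcal F\).

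This space is Tychonoff and zero-dimensional, since every set \(\{\infty\}\cup F\) is clopen, and \(\infty\) is its only non-isolated point. At an isolated point \(x\) the singleton \(\{x\}\) is a finite decreasing local \(\pi\)-base, so all invariants there equal \(\omega\). At \(\infty\) every non-empty subset of \(D\) is open. Hence a local \(\pi\)-base at \(\infty\) is the same as a \emph{\(\pi\)-base of \(\mathcal F\)}: a family \(\mathcal P\) of non-empty subsets of \(D\) such that every \(F\in\mathcal F\) contains some member of \(\mathcal P\).

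So the whole theorem reduces to filter combinatorics. We need a filter \(\mathcal F\) with four properties:
\begin{enumerate}
\item \(\mathcal F\) has a countable linked \(\pi\)-base;
\item every centred \(\pi\)-base of \(\mathcal F\) has size at least \(\omega_1\), and one of size \(\omega_1\) exists;
\item a \(\pi\)-base of \(\mathcal F\) that is an inclusion-chain exists, of size \(\omega_2\);
\item no inclusion-chain \(\pi\)-base of \(\mathcal F\) has size \(\leqslant\omega_1\).
\end{enumerate}
By Theorem~B we may take chains reverse-well-ordered, so ``decreasing'' causes no ambiguity.

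The filter is built from two pieces, following the abstract. The first is a countable ``star'' filter \(\mathcal S\) on a countable set \(C\) of centres and rays. Every pair of stars meets, so \(\mathcal S\) has a countable linked \(\pi\)-base. A centred \(\pi\)-base of \(\mathcal S\) generates a filter; diagonalising against it produces a pseudointersection, which forces the base to have size at least \(\mathfrak p\geqslant\omega_1\).

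The second is a matrix filter \(\mathcal M\) on \(\omega_1\times\omega_2\) (or a similar index set). It is generated by the tails \(\{(\alpha,\beta):\alpha\geqslant\xi \text{ or } \beta\geqslant\eta\}\) and is glued to the stars, so that every \(\mathcal F\)-set must meet each star in a large set. The aim is that:
\begin{itemize}
\item the star structure supplies countable linked \(\pi\)-bases, because each star can be padded by matrix columns without destroying pairwise intersection;
\item centred \(\pi\)-bases can be taken as \(\omega_1\) tails along the first coordinate, which gives the upper bound \(\omega_1\);
\item the \(\omega_2\) coordinate supplies a decreasing chain of length \(\omega_2\).
\end{itemize}
I will fix the precise gluing only after checking the four properties above.

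The key steps, in order, are these.
\begin{itemize}
\item First, exhibit the explicit bases: a countable linked one, a centred one of size \(\omega_1\), and a strictly decreasing one of length \(\omega_2\). This gives all the upper bounds, and it also shows that \(\pcd(X)\) is defined.
\item Second, show that \(\pc(X)=\omega\) follows from the linked base, and that \(\pc\leqslant\pcl\).
\item Third, prove that centred bases are uncountable. Given a countable centred family \(\{P_n\}\) of non-empty subsets of \(D\), I would build, by a diagonal argument using finite intersections, a set \(F\in\mathcal F\) containing no \(P_n\). This is the ZFC replacement for the bound \(\mathfrak p\geqslant\omega_1\).
\item Fourth, rule out chains of size \(\leqslant\omega_1\). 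By Theorem~B it suffices to treat reverse-well-ordered chains of cofinality \(\omega\) or \(\omega_1\). Such a chain is coinitial in a chain of sets whose traces on the matrix part are controlled by at most \(\omega_1\) coordinates. One then picks \(\eta<\omega_2\) above all those coordinates and removes the columns beyond \(\eta\) in a way that stays in \(\mathcal F\) but contains no chain member. This step uses \(\operatorname{cf}\omega_2>\omega_1\).
\end{itemize}

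The main obstacle is the interaction of the two pieces. The gluing must be loose enough that the linked star base survives in the combined filter, yet tight enough that it does not become centred and that centred bases cannot cheat by living entirely inside one piece. In particular, the lower bound \(\pcc\geqslant\omega_1\) must hold in ZFC rather than only as \(\mathfrak p\), and a plain disjoint sum of filters fails because a \(\pi\)-base of one summand is already a \(\pi\)-base of the sum. I would first try the variant where \(F\in\mathcal F\) iff, for all but boundedly many stars, \(F\) contains a matrix tail inside that star. I would then test each of the four properties against this definition before attempting the chain argument.
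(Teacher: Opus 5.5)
Your outline has the right ingredients: a one-point filter space, a countable star piece for the linked base, a matrix piece on $\omega_1\times\omega_2$, and $\operatorname{cf}\omega_2>\omega_1$. But it never fixes the filter, and the one concrete piece you describe cannot work. In the filter generated by $T_{\xi,\eta}=\{(\alpha,\beta):\alpha\geqslant\xi\text{ or }\beta\geqslant\eta\}$, the first-coordinate tails $A_\xi=[\xi,\omega_1)\times\omega_2$ satisfy $A_\xi\subseteq T_{\xi,\eta}$ for every $\eta$. So they already form a strictly decreasing $\pi$-base of length $\omega_1$. Your planned witness for the upper bound in (2), the ``$\omega_1$ tails along the first coordinate'', is itself an inclusion-chain. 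Whenever it is a $\pi$-base of $\mathcal F$ it refutes (4), whatever the gluing. The missing idea is that the $\omega_1$-sized centred base must be indexed by a directed set whose chains are all short. The paper uses $[\omega_1]^{<\omega}$. The points are $d_{\alpha,F}=(\omega_1\times\alpha)\cup(F\times\omega_2)$, the filter is generated by $G_e=\{d\in D:e\subseteq d\}$, and $B_F=\{d_{\alpha,H}:F\subseteq H\}$ is the centred base. For a chain $\pi$-base $\mathcal P$, the sets of full columns of $K_P=\bigcap(P\cap D)$ (members of $D$ viewed as subsets of $S$) form a chain of finite sets, so they take only countably many values. Take a column $\xi$ outside all of them and $\beta$ above the relevant $\alpha_P$, using regularity of $\omega_2$. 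Then $(\xi,\beta)\in S$ lies outside $\bigcup_P K_P$, whereas locality forces $\bigcup_P K_P=S$.

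Your reason for discarding the plain sum is also mistaken, and the gluing you are looking for is unnecessary. That a $\pi$-base of a summand is a $\pi$-base of $\mathcal F_0\oplus\mathcal F_1$ is exactly what gives all the upper bounds at once. The lower bounds survive by a projection lemma:
\begin{itemize}
\item if $\mathcal B$ is a centred local $\pi$-base of the sum, its traces have the strong finite-intersection property;
\item hence the traces on some $S_i$ have the finite-intersection property, since otherwise two finite subfamilies with empty traces on $S_0$ and on $S_1$ would combine into one with empty trace;
\item then $\{B\cap S_i:B\in\mathcal B\}$ is a centred local $\pi$-base of $\mathcal F_i$, and it is a chain if $\mathcal B$ was.
\end{itemize}
So the centred and decreasing characters of the sum are minima over the summands, with an undefined value counting as larger than every cardinal. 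For the star filter $\FU$ one gets $\pcc\geqslant\mathfrak p$; you must pin this filter down, e.g.\ through a non-principal ultrafilter on the centres, so that it has no infinite pseudointersection. The inequality $\mathfrak p\geqslant\omega_1$ is itself a ZFC theorem by a one-line diagonalisation, so no ``ZFC replacement'' is needed. $\mathfrak p$ matters only because it may exceed $\omega_1$, which is why the matrix summand fixes the exact centred value. By Theorem~B, a countable space with $\pcc>\omega$ admits no decreasing local $\pi$-base at all, so the star summand cannot lower $\pcd$. Thus $X(\FU\oplus\G)$ satisfies (1)--(4) directly, and the obstacle you single out does not arise.
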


The point that removes any additional set-theoretic hypothesis is the
following general fact about chains.

\begin{thmB}
Let \(X\) be a space and let \(p\in X\).  Every local \(\pi\)-base at
\(p\) which is linearly ordered by inclusion contains a coinitial local
\(\pi\)-base that can be written as a strictly decreasing transfinite
sequence.  It may be chosen to have cardinality at most
\(\max\{\omega,|X|\}\), and no larger than the original family.
Consequently, the following three conventions for a decreasing local
\(\pi\)-base give the same invariant: an arbitrary inclusion-chain, a
strictly decreasing transfinite sequence, and a weakly decreasing
transfinite sequence.

Whenever \(\pcd(p,X)\) is defined, one has
\(\pcc(p,X)\leqslant\pcd(p,X)\), because an inclusion-chain of non-empty
sets is centred.  If \(X\) is countable and \(\pcd(p,X)\) is defined,
then \(\pcd(p,X)=\omega\).  Hence, for countable \(X\), the inequality
\(\pcc(p,X)>\omega\) implies that \(\pcd(p,X)\) is undefined.
\end{thmB}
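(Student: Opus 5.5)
The plan is to reduce everything to a purely order-theoretic fact about chains of sets. Let \(\mathcal C\) be a local \(\pi\)-base at \(p\), linearly ordered by inclusion. Consider the reverse-inclusion order on \(\mathcal C\). It is a linear order, so it has a cofinal well-ordered subset (in reverse inclusion), that is, a coinitial subchain \(\mathcal D=\{D_\alpha:\alpha<\kappa\}\) with \(\alpha<\beta\Rightarrow D_\beta\subsetneq D_\alpha\), where \(\kappa=\operatorname{cf}\) of \((\mathcal C,\supseteq)\). Such a subchain exists by the standard transfinite recursion. At stage \(\alpha\), if \(\{D_\beta:\beta<\alpha\}\) is not coinitial in \(\mathcal C\), pick \(C\in\mathcal C\) with no \(D_\beta\subseteq C\). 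Linearity then gives \(C\subsetneq D_\beta\) for all \(\beta<\alpha\), and we set \(D_\alpha=C\). The recursion must stop before \(|\mathcal C|^+\), so \(|\mathcal D|\le|\mathcal C|\).

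Coinitiality makes \(\mathcal D\) a local \(\pi\)-base. Indeed, for any neighbourhood \(U\) there is \(C\in\mathcal C\) with \(C\subseteq U\), some \(D\in\mathcal D\) satisfies \(D\subseteq C\), and members of \(\mathcal D\) are non-empty open sets.

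For the bound \(\max\{\omega,|X|\}\), I use strictness. Choose \(x_\alpha\in D_\alpha\setminus D_{\alpha+1}\) for each \(\alpha\) with \(\alpha+1<\kappa\). The points \(x_\alpha\) are pairwise distinct. To see this, take \(\alpha<\beta\): then \(x_\beta\in D_\beta\subseteq D_{\alpha+1}\), while \(x_\alpha\notin D_{\alpha+1}\). Hence \(|\kappa|\le|X|+1\), so \(|\mathcal D|\le\max\{\omega,|X|\}\). This injection is the only genuinely non-formal step, and it is where strictness is essential. A weakly decreasing or arbitrary chain gives no such bound.

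Equivalence of the three conventions is then immediate. A strictly decreasing sequence is a weakly decreasing one. The range of a weakly decreasing sequence is an inclusion-chain of no larger cardinality. Finally, the argument above converts any chain into a strictly decreasing sequence of cardinality no larger than the chain. The least infinite cardinalities therefore coincide.

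For the remaining claims, a chain of non-empty sets is centred, since a finite subfamily has a smallest member, which is contained in all the others. This gives \(\pcc\le\pcd\). If \(X\) is countable, the \(\mathcal D\) obtained from any decreasing local \(\pi\)-base has size at most \(\omega\), so \(\pcd(p,X)=\omega\). The last sentence is the contrapositive: if \(\pcd(p,X)\) were defined, it would equal \(\omega\), and then \(\pcc(p,X)\le\pcd(p,X)=\omega\).
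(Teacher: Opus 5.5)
Your proposal is correct and follows essentially the same route as the paper: the same transfinite recursion (choose a member of the chain containing no previously selected member), and the same injection $\alpha\mapsto x_\alpha\in D_\alpha\setminus D_{\alpha+1}$ for the $\max\{\omega,|X|\}$ bound. The only differences are minor. The paper packages the recursion as Lemma~\ref{lem:chain-refinement} and uses the injection to stop it before $|X|^+$, whereas you stop it before $|\mathcal C|^+$ by distinctness and apply the injection afterwards; also, your recursion yields some ordinal length $\theta$, not necessarily $\operatorname{cf}(\mathcal C,\supseteq)$ (it may, e.g., pick $\omega$ sets and then a least element), which is harmless since you only use cardinal bounds on $\theta$.
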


The space used for Theorem~A is the sum of two one-point filter spaces.  The
first is countable.  Its stars form a linked local \(\pi\)-base, while its
centred local \(\pi\)-character is at least \(\mathfrak p\).  Theorem~B then
excludes every decreasing local \(\pi\)-base in this component.  The second
component is a matrix filter whose centred and decreasing local
\(\pi\)-characters are \(\omega_1\) and \(\omega_2\), respectively.
Projection lemmas for sums of filters make the two obstructions coexist at a
single point.

\section{Chains and one-point filter spaces}

For families \(\mathcal A\) and \(\mathcal C\) of subsets of the same set,
we say that \(\mathcal A\) is \emph{coinitial} in \(\mathcal C\) if every
member of \(\mathcal C\) contains a member of \(\mathcal A\).  We do not
require \(\mathcal A\subseteq\mathcal C\).

A chain of sets is called \emph{reverse well ordered} if it has an
enumeration \(\langle C_\alpha:\alpha<\theta\rangle\) such that
\(C_\beta\subsetneq C_\alpha\) whenever \(\alpha<\beta<\theta\).

\begin{lemma}\label{lem:chain-refinement}
Let \(S\) be a set and let \(\mathcal C\subseteq\mathcal P(S)\) be a
non-empty family linearly ordered by inclusion.  Then \(\mathcal C\) has a
coinitial reverse-well-ordered subchain.  If \(S\) is infinite, this
subchain may be chosen with cardinality at most \(|S|\); if \(S\) is finite,
it may be chosen finite.  In particular, every strictly decreasing
transfinite sequence of subsets of an infinite set \(S\) has length less
than \(|S|^+\).
\end{lemma}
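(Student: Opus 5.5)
We build the subchain by transfinite recursion, always descending strictly, and use cardinality bounds to stop the recursion.

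Fix a choice function on the non-empty subfamilies of \(\mathcal C\). Put \(C_0\) equal to any member of \(\mathcal C\). Suppose \(C_\beta\) has been chosen for all \(\beta<\alpha\), forming a strictly decreasing sequence. Consider the family
\[
\mathcal D_\alpha=\{D\in\mathcal C: D\subsetneq C_\beta \text{ for all } \beta<\alpha\}.
\]
If \(\mathcal D_\alpha\) is non-empty, let \(C_\alpha\) be its chosen member. Otherwise stop and put \(\theta=\alpha\). By construction \(C_\beta\subsetneq C_\alpha\) whenever \(\alpha<\beta\), so the chain is reverse well ordered.

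The recursion must stop. For each \(\alpha\) with \(\alpha+1<\theta\), pick a point \(s_\alpha\in C_\alpha\setminus C_{\alpha+1}\). The sets \(C_\beta\) decrease, so \(s_\alpha\notin C_\beta\) for every \(\beta>\alpha\). Since \(s_\alpha\in C_\alpha\), this gives \(s_\alpha\neq s_\beta\) whenever \(\alpha<\beta\). Thus \(\alpha\mapsto s_\alpha\) is injective from \(\{\alpha:\alpha+1<\theta\}\) into \(S\). Consequently the recursion halts before \(|S|^+\) steps when \(S\) is infinite, and before \(|S|+2\) steps when \(S\) is finite. In either case \(\theta\) is an ordinal with \(|\theta|\le|S|\) if \(S\) is infinite, and \(\theta\) is finite if \(S\) is finite. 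The same injection, applied to an arbitrary strictly decreasing transfinite sequence, proves the final assertion about lengths less than \(|S|^+\).

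The main point is coinitiality, and this is where linearity is used. Let \(D\in\mathcal C\). We need some \(\alpha<\theta\) with \(C_\alpha\subseteq D\). Suppose not. By linearity, for every \(\alpha<\theta\) either \(D\subseteq C_\alpha\) or \(C_\alpha\subseteq D\); the second is excluded, so \(D\subsetneq C_\alpha\) for all \(\alpha<\theta\). Then \(D\in\mathcal D_\theta\), contradicting that the recursion stopped at \(\theta\). Hence every member of \(\mathcal C\) contains some \(C_\alpha\). The subchain \(\{C_\alpha:\alpha<\theta\}\) is therefore coinitial, reverse well ordered, and has the stated cardinality bound.
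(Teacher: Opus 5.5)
Your proof is correct and takes essentially the same route as the paper: a strictly descending transfinite recursion through \(\mathcal C\), halted by the injection \(\alpha\mapsto s_\alpha\in C_\alpha\setminus C_{\alpha+1}\) into \(S\). The differences are cosmetic. You stop when no member of \(\mathcal C\) lies strictly below all chosen sets and use linearity afterwards to get coinitiality, whereas the paper stops once the chosen family is coinitial and uses linearity inside the recursion. You also handle finite \(S\) with the same injection rather than by taking the least member of the finite chain.
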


\begin{proof}
If \(S\) is finite, then \(\mathcal C\) is finite and has a least member,
which by itself is coinitial.  Suppose that \(S\) is infinite and put
\(\kappa=|S|\).

Construct members of \(\mathcal C\) by transfinite recursion.  Having chosen
\(\langle C_\beta:\beta<\alpha\rangle\), stop if the selected family is
coinitial in \(\mathcal C\).  Otherwise there is \(C\in\mathcal C\) which
contains no previously selected member.  Since \(\mathcal C\) is a chain,
\(C\subsetneq C_\beta\) for every \(\beta<\alpha\); choose this \(C\) as
\(C_\alpha\).  At stage \(0\), the selected family is empty and hence is not
coinitial because \(\mathcal C\neq\varnothing\).

The recursion must stop before stage \(\kappa^+\).  Otherwise, for every
\(\alpha<\kappa^+\), choose
\[
        x_\alpha\in C_\alpha\setminus C_{\alpha+1}.
\]
These points are pairwise distinct: if \(\alpha<\beta\), then
\(x_\beta\in C_\beta\subseteq C_{\alpha+1}\), whereas
\(x_\alpha\notin C_{\alpha+1}\).  This would inject \(\kappa^+\) into
\(S\), a contradiction.  Thus the recursion stops at some
\(\theta<\kappa^+\), and the selected reverse-well-ordered subchain is
coinitial and has cardinality at most \(\kappa\).

The final assertion follows from the same choice of one point from every
successive difference.
\end{proof}

\begin{proof}[Proof of Theorem~B]
Apply Lemma~\ref{lem:chain-refinement} to the given chain
\(\mathcal B\subseteq\mathcal P(X)\).  Its coinitial reverse-well-ordered
subchain \(\mathcal D\) is still a local \(\pi\)-base: if \(U\) is a
neighbourhood of \(p\), choose \(B\in\mathcal B\) with \(B\subseteq U\),
and then \(D\in\mathcal D\) with \(D\subseteq B\).  The cardinal bounds
follow from the lemma, and \(\mathcal D\subseteq\mathcal B\) also gives
\(|\mathcal D|\leqslant|\mathcal B|\).  A strictly decreasing transfinite
sequence is, of course, both an inclusion-chain and a weakly decreasing sequence.  Conversely, the range of
a weakly decreasing transfinite sequence is an inclusion-chain and remains
a local \(\pi\)-base; applying the preceding argument to that range gives a
strictly decreasing coinitial sequence of no greater cardinality.  Thus all
three conventions yield the same invariant whenever it is defined.

Every inclusion-chain of non-empty sets is centred, which gives
\(\pcc(p,X)\leqslant\pcd(p,X)\) whenever the latter is defined.  If \(X\)
is countable, the cardinal bound gives a countable decreasing local
\(\pi\)-base and hence \(\pcd(p,X)=\omega\).  The final assertion follows.
\end{proof}

\begin{corollary}\label{cor:regular-pcd}
Whenever \(\pcd(p,X)\) is defined, it is either \(\omega\) or a regular
uncountable cardinal.  In particular, no singular uncountable cardinal,
such as \(\omega_\omega\), can occur as a decreasing local
\(\pi\)-character.
\end{corollary}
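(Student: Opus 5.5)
Let \(\kappa=\pcd(p,X)\) and assume \(\kappa>\omega\). By Theorem~B and Lemma~\ref{lem:chain-refinement} we may fix a decreasing local \(\pi\)-base of cardinality \(\kappa\) that is reverse well ordered, say \(\langle C_\alpha:\alpha<\theta\rangle\) with \(C_\beta\subsetneq C_\alpha\) for \(\alpha<\beta<\theta\). Then \(|\theta|=\kappa\). The plan is to show that \(\operatorname{cf}(\theta)\) is itself the cardinality of a decreasing local \(\pi\)-base, and then to compare it with \(\kappa\).

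First, \(\theta\) is a limit ordinal. If \(\theta=\gamma+1\), then \(C_\gamma\) alone is coinitial in the chain, so \(\{C_\gamma\}\) is a finite decreasing local \(\pi\)-base and \(\pcd(p,X)=\omega\), which contradicts \(\kappa>\omega\).

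Next, fix a cofinal subset \(A\subseteq\theta\) of order type \(\operatorname{cf}(\theta)\). The subfamily \(\{C_\alpha:\alpha\in A\}\) is still a chain. It is also coinitial in the original family: for each \(\beta<\theta\) there is \(\alpha\in A\) with \(\alpha\geqslant\beta\), and then \(C_\alpha\subseteq C_\beta\). By the transitivity argument used in the proof of Theorem~B, it is therefore a decreasing local \(\pi\)-base. Its cardinality is \(\operatorname{cf}(\theta)\), so minimality of \(\kappa\) gives \(\kappa\leqslant\max\{\omega,\operatorname{cf}(\theta)\}\). On the other hand, \(\operatorname{cf}(\theta)\leqslant|\theta|=\kappa\). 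Hence \(\kappa=\operatorname{cf}(\theta)\), unless \(\operatorname{cf}(\theta)=\omega\), in which case \(\kappa=\omega\), again a contradiction. Since the cofinality of an ordinal is always a regular cardinal, \(\kappa\) is regular and uncountable.

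The only delicate point is the bookkeeping. We must make sure the reindexed chain really has cardinality exactly \(\kappa\), which holds because the enumeration is injective (the inclusions are strict). The invariant is also defined as a least \emph{infinite} cardinality, so finite or countable witnesses must be absorbed into \(\omega\). Both cases are handled above. The \emph{in particular} clause is then immediate, since \(\omega_\omega\) has cofinality \(\omega\) and so is singular.
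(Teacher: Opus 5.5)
Your proof is correct and follows the paper's argument step for step. You refine to a strictly decreasing local $\pi$-base $\langle C_\alpha:\alpha<\theta\rangle$ with $|\theta|=\kappa$, rule out a successor $\theta$ because its last member would alone be a local $\pi$-base, and pass to a cofinal subsequence of order type $\operatorname{cf}(\theta)$ to get $\kappa=\operatorname{cf}(\theta)$; your separate treatment of $\operatorname{cf}(\theta)=\omega$ is just a more explicit version of the paper's inequality $\kappa\leqslant\operatorname{cf}(\theta)\leqslant|\theta|=\kappa$ under the assumption $\kappa>\omega$.
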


\begin{proof}
Suppose that \(\kappa=\pcd(p,X)>\omega\), and choose an inclusion-chain
local \(\pi\)-base of cardinality \(\kappa\).  By Theorem~B, it has a
coinitial strictly decreasing local \(\pi\)-base
\(\langle C_\alpha:\alpha<\theta\rangle\) with
\(|\theta|\leqslant\kappa\).  Minimality of \(\kappa\) gives
\(|\theta|=\kappa\).

The ordinal \(\theta\) cannot be a successor: in that case its last member
would by itself be a local \(\pi\)-base, forcing
\(\pcd(p,X)=\omega\).  Choose a cofinal subset of \(\theta\) of order type
\(\operatorname{cf}(\theta)\).  The corresponding subsequence is still a
local \(\pi\)-base, so minimality gives
\[
 \kappa\leqslant\operatorname{cf}(\theta)
 \leqslant|\theta|=\kappa.
\]
Consequently, \(\kappa=\operatorname{cf}(\theta)\) is regular.
\end{proof}

Let \(\mathcal F\) be a proper filter on a set \(S\).  It is \emph{free} if
\(\bigcap\mathcal F=\varnothing\).  Denote by \(X(\mathcal F)\) the space
\(S\cup\{p\}\) in which every point of \(S\) is isolated and
\[
        \bigl\{\{p\}\cup A:A\in\mathcal F\bigr\}
\]
is a neighbourhood base at \(p\).

\begin{lemma}\label{lem:filter-basics}
Let \(\mathcal F\) be a free filter on \(S\).
\begin{enumerate}
\item[(0)] The set \(S\) is infinite, the singleton \(\{p\}\) is not open
in \(X(\mathcal F)\), and every non-empty open subset of
\(X(\mathcal F)\) meets \(S\).
\item[(1)] For every finite \(a\subseteq S\), one has
\(S\setminus a\in\mathcal F\).  Consequently, every member of
\(\mathcal F\) is infinite.
\item[(2)] The space \(X(\mathcal F)\) is zero-dimensional and Hausdorff,
hence Tychonoff.
\item[(3)] Let \(\mathcal B\) be a family of non-empty open subsets of
\(X(\mathcal F)\).  Every trace \(B\cap S\), \(B\in\mathcal B\), is
non-empty, and \(\mathcal B\) is a local \(\pi\)-base at \(p\) if and only
if its trace family
\[
        \mathcal B\mathbin{|}S=\{B\cap S:B\in\mathcal B\}
\]
is coinitial in \(\mathcal F\).
\end{enumerate}
\end{lemma}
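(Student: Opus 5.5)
The plan is to verify the four items in order, each directly from the definitions; everything rests on the freeness condition \(\bigcap\mathcal F=\varnothing\) together with the fact that \(\mathcal F\) is a proper filter.

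For (0) and (1), note first that a proper filter has \(\varnothing\notin\mathcal F\) and \(S\in\mathcal F\). Given \(x\in S\), freeness gives some \(A_x\in\mathcal F\) with \(x\notin A_x\). Since \(A_x\subseteq S\setminus\{x\}\), we get \(S\setminus\{x\}\in\mathcal F\). For a finite set \(a\subseteq S\), the set \(S\setminus a\) is a finite intersection of the sets \(S\setminus\{x\}\), \(x\in a\), so it lies in \(\mathcal F\); this proves the first part of (1). If some \(A\in\mathcal F\) were finite, then \(A\cap(S\setminus A)=\varnothing\) would belong to \(\mathcal F\), which is impossible. Hence every member of \(\mathcal F\) is infinite, and in particular \(S\in\mathcal F\) is infinite. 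Every open set containing \(p\) contains some \(\{p\}\cup A\) with \(A\in\mathcal F\) non-empty. Therefore \(\{p\}\) is not open, and every non-empty open set meets \(S\): either it contains a point of \(S\), or it contains \(p\) and thus some non-empty \(A\).

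For (2), the sets \(\{x\}\) with \(x\in S\) are clopen, because \(S\setminus\{x\}\in\mathcal F\) makes the complement of \(\{x\}\) a neighbourhood of \(p\). The sets \(\{p\}\cup A\) are clopen as well: their complement \(S\setminus A\) consists of isolated points, so it is open. These clopen sets form a base, so the space is zero-dimensional. It is also \(T_1\), since singletons are closed: for \(x\in S\) this was just shown, and \(\{p\}\) is closed because \(S\) is open. A zero-dimensional \(T_1\) space is Hausdorff and Tychonoff by the standard argument: separate points with a clopen set and use its characteristic function.

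For (3), the traces are non-empty by (0). Suppose \(\mathcal B\) is a local \(\pi\)-base and \(A\in\mathcal F\). Then \(\{p\}\cup A\) is a neighbourhood of \(p\), so it contains some \(B\in\mathcal B\), and \(B\cap S\subseteq A\). Conversely, suppose the traces are coinitial and \(U\) is a neighbourhood of \(p\). The main subtlety is that a trace \(B\cap S\subseteq A\) only controls \(B\cap S\), while \(B\) itself might contain \(p\); we handle this case as follows. Choose \(A\in\mathcal F\) with \(\{p\}\cup A\subseteq U\), and pick \(B\in\mathcal B\) with \(B\cap S\subseteq A\). Then \(B\subseteq\{p\}\cup(B\cap S)\subseteq\{p\}\cup A\subseteq U\). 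Thus \(\mathcal B\) is a local \(\pi\)-base, and no step presents a genuine difficulty.
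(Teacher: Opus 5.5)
Your proof is correct and follows essentially the same route as the paper: freeness gives \(S\setminus a\in\mathcal F\) for finite \(a\), properness then forces every filter member (including \(S\)) to be infinite, the singletons and the basic neighbourhoods of \(p\) form a clopen base, and (3) uses the same two-direction trace argument, including the observation \(B\subseteq\{p\}\cup(B\cap S)\). The only cosmetic difference is that you obtain Hausdorffness and the Tychonoff property from \(T_1\) plus a clopen base, whereas the paper separates \(p\) from each \(s\in S\) directly by clopen sets.
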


\begin{proof}
For every \(s\in S\), freeness gives \(A_s\in\mathcal F\) with
\(s\notin A_s\).  If \(a\subseteq S\) is finite, then
\(\bigcap_{s\in a}A_s\subseteq S\setminus a\), so upward closure gives
\(S\setminus a\in\mathcal F\).  Taking \(a=S\) shows that \(S\) cannot be
finite, for otherwise \(\varnothing\in\mathcal F\).  If a finite set
\(A\) belonged to \(\mathcal F\), then \(S\setminus A\in\mathcal F\) as
well, and hence
\[
        \varnothing=A\cap(S\setminus A)\in\mathcal F,
\]
contradicting properness.  This proves (1) and the first assertion of (0).

The singleton \(\{p\}\) is not open: an open set containing \(p\) contains
\(\{p\}\cup A\) for some \(A\in\mathcal F\), and such an \(A\) is
non-empty by (1).  A non-empty open set which omits \(p\) is a non-empty
subset of \(S\), while one which contains \(p\) contains a non-empty filter
member.  Thus every non-empty open set meets \(S\), completing (0).

Each basic neighbourhood \(\{p\}\cup A\) is clopen because its complement
is a subset of the discrete part \(S\).  Moreover, for every \(s\in S\),
the singleton \(\{s\}\) is clopen because
\(S\setminus\{s\}\in\mathcal F\).  These sets form a clopen base and also
separate \(p\) from each \(s\), proving (2).

The first assertion of (3) follows from (0).  If \(\mathcal B\) is local
and \(A\in\mathcal F\), choose \(B\in\mathcal B\) with
\(B\subseteq\{p\}\cup A\); then \(B\cap S\subseteq A\).  Conversely,
suppose the trace family is coinitial.  Every neighbourhood \(U\) of \(p\)
contains \(\{p\}\cup A\) for some \(A\in\mathcal F\).  Choose
\(B\in\mathcal B\) with \(B\cap S\subseteq A\).  Since
\(B\subseteq\{p\}\cup(B\cap S)\), one has \(B\subseteq U\).
\end{proof}

A family \(\mathcal A\) of infinite subsets of a set has the \emph{strong
finite-intersection property} if every finite intersection of its members is
infinite.  An infinite set \(P\) is a \emph{pseudointersection} of
\(\mathcal A\) if \(P\modfin A\) for every \(A\in\mathcal A\).  The
pseudointersection number \(\mathfrak p\) is the least cardinality of a
family of infinite subsets of a countably infinite set which has the strong
finite-intersection property but has no infinite pseudointersection; see
\cite{Blass}.

\begin{lemma}\label{lem:traces-sfip}
Let \(\mathcal F\) be a free filter on \(S\), and let \(\mathcal B\) be a
centred local \(\pi\)-base at \(p\) in \(X(\mathcal F)\).  Then
\(\mathcal B\mathbin{|}S\) is coinitial in \(\mathcal F\) and has the
strong finite-intersection property.
\end{lemma}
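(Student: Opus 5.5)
Coinitiality is already given by Lemma~\ref{lem:filter-basics}(3), since every member of \(\mathcal B\) is a non-empty open set. Each trace \(B\cap S\) is non-empty by part~(0) of that lemma. So the plan is to prove the strong finite-intersection property: every finite intersection of traces is infinite, and in particular every single trace is infinite.

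Fix \(B_1,\dots,B_n\in\mathcal B\) with \(n\geqslant 1\), and put \(W=B_1\cap\dots\cap B_n\). This set is open, and it is non-empty because \(\mathcal B\) is centred. Note that
\[
        W\cap S=(B_1\cap S)\cap\dots\cap(B_n\cap S).
\]
So it suffices to show that every non-empty open subset \(W\) of \(X(\mathcal F)\) has infinite trace on \(S\). I will split into two cases according to whether \(p\in W\).

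\emph{Case 1: \(p\notin W\).} Then \(W\) is a non-empty subset of \(S\), but it might be finite, so openness alone does not suffice. Here I use the local \(\pi\)-base property. Since \(W\) is finite, \(S\setminus W\in\mathcal F\) by Lemma~\ref{lem:filter-basics}(1). Hence \(\{p\}\cup(S\setminus W)\) is a neighbourhood of \(p\), so some \(B\in\mathcal B\) satisfies \(B\subseteq\{p\}\cup(S\setminus W)\). Centredness of \(\mathcal B\) gives \(B\cap B_1\cap\dots\cap B_n\neq\varnothing\). But this intersection lies in
\[
        W\cap\bigl(\{p\}\cup(S\setminus W)\bigr)=\varnothing,
\]
where the last equality uses \(p\notin W\). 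This contradiction shows that \(W\) is in fact infinite.

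\emph{Case 2: \(p\in W\).} Then \(W\) contains \(\{p\}\cup A\) for some \(A\in\mathcal F\), and \(A\) is infinite by Lemma~\ref{lem:filter-basics}(1). Alternatively, the argument of Case~1 works uniformly: if \(W\cap S\) were finite, choose \(B\subseteq\{p\}\cup(S\setminus(W\cap S))\). Then \(B\cap W\subseteq\{p\}\). Also \(B\cap W\) is non-empty by centredness, and it is open as an intersection of open sets. So \(B\cap W=\{p\}\) would be open, contradicting Lemma~\ref{lem:filter-basics}(0).

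I would present this uniform argument, which avoids the case split. The one point needing care is that a single member of \(\mathcal B\) may be a finite set of isolated points. So infiniteness must come from combining centredness with the \(\pi\)-base property, not from openness alone.
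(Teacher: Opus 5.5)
Your proof is correct and matches the paper's: the case \(p\in W\) is handled by a filter member lying inside \(W\), and the case \(p\notin W\) by choosing \(B\in\mathcal B\) inside \(\{p\}\cup(S\setminus T)\) and contradicting centredness, exactly as the paper does. Your uniform variant, which instead concludes that \(\{p\}\) would be open and contradicts Lemma~\ref{lem:filter-basics}(0), is also valid and simply removes the case split; the empty subfamily, whose intersection is the infinite set \(S\), is trivial.
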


\begin{proof}
Coinitiality follows from Lemma~\ref{lem:filter-basics}.  The
intersection indexed by the empty subfamily is \(S\), which is infinite by
Lemma~\ref{lem:filter-basics}(0).  Let \(\mathcal B_0\) now be a non-empty
finite subfamily of \(\mathcal B\), and put
\[
        T=\bigcap_{B\in\mathcal B_0}(B\cap S).
\]
If every member of \(\mathcal B_0\) contains \(p\), then each trace belongs
to \(\mathcal F\): an open set containing \(p\) contains a basic
neighbourhood, and its trace is therefore a superset of a filter member.
Thus \(T\in\mathcal F\), and Lemma~\ref{lem:filter-basics} shows that \(T\)
is infinite.

Suppose that some member of \(\mathcal B_0\) omits \(p\).  If \(T\) were
finite, including the case \(T=\varnothing\), then
\(S\setminus T\in\mathcal F\) by Lemma~\ref{lem:filter-basics}.  Locality
would give \(B_1\in\mathcal B\) with
\[
        B_1\subseteq\{p\}\cup(S\setminus T).
\]
The intersection of \(\mathcal B_0\cup\{B_1\}\) would be empty: it cannot
contain \(p\), because a member of \(\mathcal B_0\) omits \(p\), and its
trace on \(S\) is contained in \(T\cap(S\setminus T)=\varnothing\).  This
contradicts centredness.  Hence \(T\) is infinite.
\end{proof}

\begin{lemma}\label{lem:countable-pseudo}
Every non-empty countable family of subsets of a countable set with the
strong finite-intersection property has an infinite pseudointersection.
Consequently, \(\mathfrak p\geqslant\omega_1\).
\end{lemma}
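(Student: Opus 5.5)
The plan is the standard diagonal argument. Let \(\mathcal A=\{A_n:n<\omega\}\) be a non-empty countable family of subsets of a countable set \(S\) with the strong finite-intersection property, enumerated with repetitions allowed if \(\mathcal A\) is finite. The whole construction will take place inside the finite intersections
\[
        D_n=A_0\cap A_1\cap\cdots\cap A_n,
\]
which form a decreasing sequence. Each \(D_n\) is infinite by the strong finite-intersection property, and in particular \(S\) must be countably infinite.

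Next I pick points recursively. Having chosen distinct \(x_0,\dots,x_{n-1}\), choose
\[
        x_n\in D_n\setminus\{x_0,\dots,x_{n-1}\}.
\]
This is possible because \(D_n\) is infinite. Put \(P=\{x_n:n<\omega\}\), which is infinite because its points are pairwise distinct. For a fixed \(m\), every \(x_n\) with \(n\geqslant m\) lies in \(D_n\subseteq D_m\subseteq A_m\). Hence \(P\setminus A_m\subseteq\{x_0,\dots,x_{m-1}\}\) is finite, that is, \(P\modfin A_m\). So \(P\) is an infinite pseudointersection of \(\mathcal A\).

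For the consequence, recall the definition: \(\mathfrak p\) is the least cardinality of a family of infinite subsets of a countably infinite set that has the strong finite-intersection property but no infinite pseudointersection. By the first part, no countable family can witness this. The empty family is excluded as well, since any infinite set, for instance the underlying countable set itself, is vacuously an infinite pseudointersection of it. Therefore every witnessing family is uncountable, and \(\mathfrak p\geqslant\omega_1\).

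There is no genuine obstacle here. The only points needing care are the degenerate cases, namely a finite \(\mathcal A\) (handled by repeating members) and the empty family in the definition of \(\mathfrak p\), together with keeping the chosen points distinct so that \(P\) is infinite.
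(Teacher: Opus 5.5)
Your proof is correct and is essentially the paper's diagonal argument: enumerate with repetitions, choose distinct points \(x_n\) in the \(n\)-th finite intersection \(A_0\cap\cdots\cap A_n\), and note that \(P\setminus A_k\subseteq\{x_0,\dots,x_{k-1}\}\). You also spell out why the empty family cannot witness \(\mathfrak p\), which the paper leaves implicit when it deduces \(\mathfrak p\geqslant\omega_1\).
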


\begin{proof}
Enumerate the family as \(\{A_n:n<\omega\}\), repeating members when the
family is finite.  Recursively choose distinct points
\[
        x_n\in\bigcap_{k\leqslant n}A_k.
\]
This is possible because every displayed intersection is infinite.  Then
\(P=\{x_n:n<\omega\}\) is infinite and
\(P\setminus A_k\subseteq\{x_0,\ldots,x_{k-1}\}\) for every \(k<\omega\).
\end{proof}

Let \(\mathcal F_0\) and \(\mathcal F_1\) be free filters on disjoint sets
\(S_0\) and \(S_1\).  Their sum is
\[
 \mathcal F_0\oplus\mathcal F_1
 =\bigl\{A\subseteq S_0\dcup S_1:
 A\cap S_i\in\mathcal F_i\text{ for }i<2\bigr\}.
\]

\begin{lemma}\label{lem:sum-free}
The family \(\mathcal F_0\oplus\mathcal F_1\) is a free filter on
\(S_0\dcup S_1\).
\end{lemma}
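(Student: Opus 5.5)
The plan is to verify the filter axioms coordinatewise and then check freeness, writing \(\mathcal H=\mathcal F_0\oplus\mathcal F_1\) and \(S=S_0\dcup S_1\).

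First, \(S\in\mathcal H\), because \(S\cap S_i=S_i\in\mathcal F_i\) for each \(i<2\). Next, \(\varnothing\notin\mathcal H\), since \(\varnothing\cap S_0=\varnothing\notin\mathcal F_0\) by properness of \(\mathcal F_0\). Thus \(\mathcal H\) is non-empty and proper.

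For upward closure, let \(A\in\mathcal H\) and \(A\subseteq B\subseteq S\). Then \(A\cap S_i\subseteq B\cap S_i\subseteq S_i\). Upward closure of \(\mathcal F_i\) gives \(B\cap S_i\in\mathcal F_i\) for each \(i\), so \(B\in\mathcal H\).

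For closure under finite intersections, let \(A,B\in\mathcal H\). Then \((A\cap B)\cap S_i=(A\cap S_i)\cap(B\cap S_i)\in\mathcal F_i\), so \(A\cap B\in\mathcal H\).

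For freeness, let \(s\in S\), say \(s\in S_j\). Since \(\mathcal F_j\) is free, there is \(A_j\in\mathcal F_j\) with \(s\notin A_j\). Put \(A=A_j\cup S_{1-j}\). Then \(A\cap S_j=A_j\in\mathcal F_j\) and \(A\cap S_{1-j}=S_{1-j}\in\mathcal F_{1-j}\), using disjointness of \(S_0\) and \(S_1\) in both equalities. Hence \(A\in\mathcal H\) and \(s\notin A\), so \(\bigcap\mathcal H=\varnothing\).

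There is no real obstacle. The only point needing care is disjointness: it is what makes the traces of \(A_j\cup S_{1-j}\) computed above exact, and in particular what keeps \(s\) out of the witness \(A\).
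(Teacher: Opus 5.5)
Your proof is correct and follows the paper's argument: you check the filter axioms and properness coordinatewise, and you witness freeness by \(A_j\cup A_{1-j}\). The only difference is that you take the specific choice \(A_{1-j}=S_{1-j}\), where the paper allows an arbitrary member of \(\mathcal F_{1-j}\).
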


\begin{proof}
The filter axioms follow coordinatewise, and properness follows from the
properness of the two coordinate filters.  If \(s\in S_i\), choose
\(A_i\in\mathcal F_i\) with \(s\notin A_i\), and choose any
\(A_{1-i}\in\mathcal F_{1-i}\).  Then
\(A_0\cup A_1\in\mathcal F_0\oplus\mathcal F_1\) and omits \(s\).  Hence
the intersection of the sum filter is empty.
\end{proof}

\begin{lemma}\label{lem:sum-projection}
Let \(\mathcal B\) be a centred local \(\pi\)-base at the non-isolated
point of \(X(\mathcal F_0\oplus\mathcal F_1)\).  For some \(i<2\),
\[
        \mathcal B_i=\{B\cap S_i:B\in\mathcal B\}
\]
is a centred local \(\pi\)-base at the non-isolated point of
\(X(\mathcal F_i)\).  If \(\mathcal B\) is linearly ordered by inclusion,
then \(\mathcal B_i\) is linearly ordered by inclusion as well.
\end{lemma}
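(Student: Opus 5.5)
The plan is to handle coinitiality, which holds for both indices, separately from centredness, which needs a choice of \(i\). Throughout, \(p\) denotes the non-isolated point, \(S=S_0\dcup S_1\) and \(\mathcal F=\mathcal F_0\oplus\mathcal F_1\). By Lemma~\ref{lem:sum-free}, \(\mathcal F\) is a free filter on \(S\). So Lemma~\ref{lem:traces-sfip} applies to \(\mathcal B\): the trace family \(\mathcal B\mathbin{|}S\) is coinitial in \(\mathcal F\) and has the strong finite-intersection property.

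\emph{Coinitiality for both \(i\).} Fix \(i<2\) and \(A\in\mathcal F_i\). The set \(A\cup S_{1-i}\) belongs to \(\mathcal F\), because its traces are \(A\in\mathcal F_i\) and \(S_{1-i}\in\mathcal F_{1-i}\). Coinitiality of \(\mathcal B\mathbin{|}S\) gives \(B\in\mathcal B\) with \(B\cap S\subseteq A\cup S_{1-i}\). Intersecting with \(S_i\) yields \(B\cap S_i\subseteq A\). Hence \(\mathcal B_i\) is coinitial in \(\mathcal F_i\), whatever \(i\) is.

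\emph{Choosing \(i\).} Every subset of \(S_i\) is open in \(X(\mathcal F_i)\), since the points of \(S_i\) are isolated. Therefore, once we have an index \(i\) for which all members of \(\mathcal B_i\) are non-empty and \(\mathcal B_i\) is centred, Lemma~\ref{lem:filter-basics}(3) shows that \(\mathcal B_i\) is a centred local \(\pi\)-base at the non-isolated point of \(X(\mathcal F_i)\).

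Say that \(i\) is \emph{bad} if there is a finite \(\mathcal B^i\subseteq\mathcal B\) with \(\bigcap_{B\in\mathcal B^i}(B\cap S_i)=\varnothing\). This covers both an empty member of \(\mathcal B_i\) (a singleton \(\mathcal B^i\)) and a failure of centredness. The empty subfamily is harmless, since its intersection is \(S_i\), which is non-empty.

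Suppose, towards a contradiction, that both indices are bad, witnessed by \(\mathcal B^0\) and \(\mathcal B^1\). Put \(\mathcal B^*=\mathcal B^0\cup\mathcal B^1\), a finite subfamily of \(\mathcal B\). Its trace intersection satisfies
\[
\bigcap_{B\in\mathcal B^*}(B\cap S)\subseteq\Bigl(\bigcap_{B\in\mathcal B^0}(B\cap S_0)\Bigr)\cup\Bigl(\bigcap_{B\in\mathcal B^1}(B\cap S_1)\Bigr)=\varnothing.
\]
This contradicts the strong finite-intersection property from Lemma~\ref{lem:traces-sfip}. Hence some \(i\) is not bad, and for that \(i\) the family \(\mathcal B_i\) is the required centred local \(\pi\)-base.

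\emph{The chain clause.} The map \(B\mapsto B\cap S_i\) preserves inclusion. So if \(\mathcal B\) is linearly ordered by inclusion, any two members of \(\mathcal B_i\) are comparable, and \(\mathcal B_i\) is linearly ordered by inclusion.

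The only step with real content is the contradiction argument. Its key point is that a single finite subfamily, the union \(\mathcal B^*\), must witness both failures at once, and the displayed inclusion justifies this. Everything else is bookkeeping with Lemmas~\ref{lem:filter-basics} and~\ref{lem:traces-sfip}.
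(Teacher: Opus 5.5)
Your proof is correct and follows essentially the same route as the paper. Both arguments pad a member of \(\mathcal F_i\) by a member of the other coordinate filter to get coinitiality, and both choose \(i\) by taking the union of two finite witnessing subfamilies, which would contradict the strong finite-intersection property from Lemma~\ref{lem:traces-sfip}. The only difference is cosmetic: you check coinitiality for both indices first and deduce locality from Lemma~\ref{lem:filter-basics}(3), whereas the paper fixes \(i\) first and verifies locality directly.
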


\begin{proof}
By Lemmas~\ref{lem:sum-free} and~\ref{lem:traces-sfip}, the traces of
\(\mathcal B\) on \(S_0\dcup S_1\) have the strong finite-intersection
property.  At least one coordinate family has the finite-intersection
property.  Indeed, otherwise there would be finite subfamilies
\(\mathcal B'_0,\mathcal B'_1\subseteq\mathcal B\) such that
\[
 \bigcap_{B\in\mathcal B'_i}(B\cap S_i)=\varnothing
 \qquad(i<2).
\]
Both \(\mathcal B'_0\) and \(\mathcal B'_1\) are necessarily non-empty,
because the intersection indexed by the empty family is
\(S_i\neq\varnothing\).  Thus
\(\mathcal B'=\mathcal B'_0\cup\mathcal B'_1\) is a legitimate non-empty
finite subfamily.  The intersection of its traces is empty in each
coordinate, and hence
\[
 \bigcap_{B\in\mathcal B'}\bigl(B\cap(S_0\dcup S_1)\bigr)=\varnothing,
\]
contrary to Lemma~\ref{lem:traces-sfip}.

Fix a coordinate \(i\) whose projection has the finite-intersection
property.  In particular, every member of \(\mathcal B_i\) is non-empty,
and \(\mathcal B_i\) is centred.  Given \(A_i\in\mathcal F_i\), choose any
\(A_{1-i}\in\mathcal F_{1-i}\).  Locality in the sum yields
\(B\in\mathcal B\) with
\[
        B\subseteq\{p\}\cup A_0\cup A_1.
\]
Hence \(\varnothing\neq B\cap S_i\subseteq A_i\), proving locality in the
\(i\)-th component.  Finally, coordinate projection preserves inclusions.
\end{proof}

\section{The countable star filter}\label{sec:star}

Let \(E=[\omega]^2\), viewed as the edge set of the complete graph on
\(\omega\).  For \(n<\omega\), let
\[
        L_n=\bigl\{\{n,m\}:m<\omega,\ m\neq n\bigr\}
\]
be the star at \(n\).  Fix a non-principal ultrafilter \(U\) on \(\omega\).
For \(A\subseteq E\), put
\[
        \sigma(A)=\{n<\omega:L_n\subseteq A\},
        \qquad
        \FU=\{A\subseteq E:\sigma(A)\in U\}.
\]

\begin{lemma}\label{lem:star-filter}
The family \(\FU\) is a free filter on \(E\), and
\(\{L_n:n<\omega\}\) is a linked family coinitial in \(\FU\).
\end{lemma}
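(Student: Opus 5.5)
The argument is a direct verification from the definitions; the only properties of \(U\) needed are that it is a proper filter and is non-principal. The key observation is that \(\sigma\) converts intersections into intersections: for \(A,B\subseteq E\) we have \(L_n\subseteq A\cap B\) if and only if \(L_n\subseteq A\) and \(L_n\subseteq B\), so
\[
        \sigma(A\cap B)=\sigma(A)\cap\sigma(B).
\]
Also \(\sigma\) is monotone, \(\sigma(E)=\omega\) and \(\sigma(\varnothing)=\varnothing\), since every star \(L_n\) is non-empty.

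First I will check the filter axioms. From \(\sigma(E)=\omega\in U\) we get \(E\in\FU\). Upward closure follows from monotonicity of \(\sigma\) together with upward closure of \(U\). Closure under finite intersections follows from the displayed identity and closure of \(U\) under intersections. Properness holds because \(\sigma(\varnothing)=\varnothing\notin U\).

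Next I will show that \(\FU\) is free. Fix an edge \(e=\{a,b\}\) and put \(A=E\setminus\{e\}\). A star \(L_n\) contains \(e\) exactly when \(n\in\{a,b\}\). Hence \(\sigma(A)=\omega\setminus\{a,b\}\), which belongs to \(U\) because \(U\) is non-principal. So \(A\in\FU\) and \(e\notin A\), and therefore \(\bigcap\FU=\varnothing\).

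Finally I will prove the linked and coinitial assertions. Two stars \(L_n\) and \(L_m\) with \(n\neq m\) both contain the edge \(\{n,m\}\), so any two stars meet and the family is linked. For coinitiality, take \(A\in\FU\). Then \(\sigma(A)\in U\) is non-empty, so we may pick \(n\in\sigma(A)\), and this gives \(L_n\subseteq A\). No step is a real obstacle. The only points that need care are that freeness uses non-principality, and that the stars themselves need not belong to \(\FU\). Indeed they do not, since \(\sigma(L_n)=\{n\}\notin U\); coinitiality, in the paper's sense, does not require membership.
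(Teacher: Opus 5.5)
Your proof is correct and follows the paper's argument essentially step for step. The intersection-preserving, monotone behaviour of \(\sigma\) gives the filter axioms, properness follows from \(\sigma(\varnothing)=\varnothing\), the edge \(\{n,m\}\) gives linkedness, and any \(n\in\sigma(A)\) gives coinitiality. The only cosmetic difference is in freeness: you compute \(\sigma(E\setminus\{e\})=\omega\setminus\{a,b\}\) edge by edge, whereas the paper shows \(E\setminus a\in\FU\) for every finite \(a\subseteq E\). Both versions use that the non-principal ultrafilter \(U\) contains all cofinite sets, so your opening remark should say that \(U\) is a free filter (or an ultrafilter), since mere non-principality of a filter would not suffice.
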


\begin{proof}
The identities
\[
        \sigma(A\cap B)=\sigma(A)\cap\sigma(B)
        \quad\text{and}\quad
        A\subseteq B\Longrightarrow\sigma(A)\subseteq\sigma(B)
\]
show that \(\FU\) is a filter; it is proper because
\(\sigma(\varnothing)=\varnothing\notin U\).  If \(a\subseteq E\) is
finite, then every star except those centred at one of the finitely many
endpoints of edges in \(a\) is contained in \(E\setminus a\).  A
non-principal ultrafilter on \(\omega\) contains every cofinite subset of
\(\omega\), so \(\sigma(E\setminus a)\in U\), and hence
\(E\setminus a\in\FU\).  In particular,
\[
 \bigcap\FU\subseteq\bigcap_{e\in E}(E\setminus\{e\})=\varnothing,
\]
which proves freeness.

For distinct \(n,m<\omega\),
\(L_n\cap L_m=\{\{n,m\}\}\), and hence the stars are linked.  If
\(A\in\FU\), then \(\sigma(A)\in U\) is non-empty; choosing
\(n\in\sigma(A)\) gives \(L_n\subseteq A\).  Thus the stars are coinitial.
\end{proof}

\begin{lemma}\label{lem:graph-ultrafilter}
For every infinite \(P\subseteq E\), there is a set \(B\subseteq\omega\)
with \(B\notin U\) such that \(P\cap[B]^2\) is infinite.
\end{lemma}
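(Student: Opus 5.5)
Let \(V\subseteq\omega\) be the set of endpoints of edges in \(P\), i.e. \(V=\bigcup P\). I split into two cases according to whether \(V\) is finite.

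\emph{Case 1: \(V\) is finite.} Then \(P\subseteq[V]^2\) is finite, contradicting the assumption that \(P\) is infinite. So \(V\) is infinite.

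\emph{Case 2: \(V\) is infinite.} I first pass to an infinite subset \(Q\subseteq P\) with a nice shape. By Ramsey-type reasoning on edges there are two subcases.
\begin{enumerate}
\item[(a)] Some vertex \(n\) lies on infinitely many edges of \(P\). Then \(Q=\{\{n,m\}:m\in M\}\subseteq P\) for an infinite \(M\subseteq\omega\setminus\{n\}\).
\item[(b)] Every vertex has finite degree in \(P\). Then I extract an infinite matching \(Q=\{\{a_k,b_k\}:k<\omega\}\subseteq P\) with all \(a_k,b_k\) distinct. I build it greedily: each chosen edge blocks only finitely many others, because the finitely many used vertices have finite degree.
\end{enumerate}

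In subcase (a), split \(M\) into two disjoint infinite sets \(M_0,M_1\). At most one of \(M_0, M_1\) lies in \(U\), so some \(M_j\notin U\). Since \(U\) is non-principal, \(\{n\}\notin U\). Because \(U\) is an ultrafilter, the union of two non-members is a non-member, so \(B=M_j\cup\{n\}\notin U\). Then \(P\cap[B]^2\supseteq\{\{n,m\}:m\in M_j\}\) is infinite.

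In subcase (b), I partition \(\{a_k,b_k:k<\omega\}\) so that the infinitely many edges of \(Q\) are spread over two disjoint vertex sets. Let \(C_0=\bigcup_{k\text{ even}}\{a_k,b_k\}\) and \(C_1=\bigcup_{k\text{ odd}}\{a_k,b_k\}\); these are disjoint. Hence at most one of them belongs to \(U\), and I take \(B=C_j\notin U\). Each \([C_j]^2\) contains infinitely many edges of \(Q\), so \(P\cap[B]^2\) is infinite.

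The only step needing care is the dichotomy producing either an infinite star or an infinite matching inside \(P\). This is the standard argument: if no vertex has infinite degree, the greedy construction never gets stuck, because after finitely many steps only finitely many edges of the infinite set \(P\) are excluded. Everything else is the elementary fact that of two disjoint sets at most one lies in \(U\), combined with non-principality to discard a single point.
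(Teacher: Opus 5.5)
Your proposal is correct and follows essentially the same route as the paper: the infinite-star/infinite-matching dichotomy, splitting the star's leaves or the matching into two disjoint infinite pieces so that one piece lies outside \(U\), and using non-principality to add the centre \(n\) back. The only difference is your preliminary case on whether \(\bigcup P\) is finite, which is redundant but harmless.
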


\begin{proof}
Regard \(P\) as a graph on \(\omega\).  Suppose first that some vertex
\(n\) has infinitely many neighbours.  Partition an infinite set of these
neighbours into two disjoint infinite sets.  Choose one part \(M\) which
does not belong to \(U\).  (This is possible because the two parts are
disjoint.)  Since members
of \(E=[\omega]^2\) have two distinct endpoints, \(n\) is not one of its
own neighbours, and hence \(n\notin M\).  Put \(B=M\cup\{n\}\).  If
\(B\in U\), then
\(\omega\setminus M\in U\) as \(M\notin U\), and hence
\[
        \{n\}=B\cap(\omega\setminus M)\in U,
\]
contrary to non-principality.  Thus \(B\notin U\), while
\(P\cap[B]^2\) contains infinitely many edges incident with \(n\).

Suppose now that every vertex has finite degree in \(P\).  A greedy
construction produces an infinite matching: after choosing finitely many
matching edges, only finitely many edges are incident with their endpoints,
so infinitely many edges remain.  Split the matching into two infinite
submatchings.  Their vertex sets are disjoint and therefore cannot both
belong to \(U\).  The vertex set \(B\) of one which does not belong to
\(U\) satisfies the conclusion.
\end{proof}

\begin{lemma}\label{lem:no-pseudo-star}
The filter \(\FU\) has no infinite pseudointersection.
\end{lemma}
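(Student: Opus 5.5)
We argue by contradiction: suppose \(P\subseteq E\) is infinite and \(P\modfin A\) for every \(A\in\FU\). The plan is to use Lemma~\ref{lem:graph-ultrafilter} to produce a member of \(\FU\) that misses infinitely many edges of \(P\).

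Apply Lemma~\ref{lem:graph-ultrafilter} to \(P\) to obtain \(B\subseteq\omega\) with \(B\notin U\) and \(P\cap[B]^2\) infinite. Put
\[
        A=E\setminus[B]^2 .
\]
Then \(P\setminus A=P\cap[B]^2\) is infinite, so \(P\nmodfin A\). It remains to show that \(A\in\FU\), which contradicts the assumption on \(P\).

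To see this, compute \(\sigma(A)\). Take \(n\in\omega\setminus B\). Every edge \(\{n,m\}\in L_n\) has an endpoint \(n\notin B\), so it does not lie in \([B]^2\). Hence \(L_n\subseteq A\), and so \(\omega\setminus B\subseteq\sigma(A)\). Since \(U\) is an ultrafilter and \(B\notin U\), we have \(\omega\setminus B\in U\). Upward closure then gives \(\sigma(A)\in U\), that is, \(A\in\FU\).

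Essentially all of the work lies in Lemma~\ref{lem:graph-ultrafilter}, whose case analysis (a vertex of infinite degree versus an infinite matching) already supplies \(B\). The only point to check here is the choice of the test set \(A\). The complement of \([B]^2\) contains every star centred outside \(B\), and it is exactly the ultrafilter property that puts the set of such centres into \(U\).
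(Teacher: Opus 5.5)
Your proof is correct and is essentially the paper's argument. Your test set \(E\setminus[B]^2\) coincides with the paper's \(H_A=\bigcup_{n\in\omega\setminus B}L_n\), since an edge lies in that union exactly when it has an endpoint outside \(B\), and membership in \(\FU\) is checked the same way via \(\omega\setminus B\subseteq\sigma(\cdot)\) with \(\omega\setminus B\in U\).
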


\begin{proof}
Let \(P\subseteq E\) be infinite.  Choose \(B\notin U\) as in
Lemma~\ref{lem:graph-ultrafilter}, and put \(A=\omega\setminus B\in U\).
Set
\[
        H_A=\bigcup_{n\in A}L_n.
\]
Since \(A\subseteq\sigma(H_A)\), we have \(H_A\in\FU\).  On the other
hand,
\[
        P\setminus H_A=P\cap[B]^2
\]
is infinite.  Hence \(P\nmodfin H_A\).  As \(P\) was arbitrary, no
infinite pseudointersection exists.
\end{proof}

\begin{proposition}\label{prop:star-invariants}
Let \(Y=X(\FU)\), and let \(p\) be its non-isolated point.  Then
\[
        \pcl(p,Y)=\omega,
        \qquad
        \pcc(p,Y)\geqslant\mathfrak p,
\]
and \(p\) admits no decreasing local \(\pi\)-base, under any of the three conventions
from Theorem~B.
\end{proposition}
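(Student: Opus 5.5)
The plan is to treat the three assertions separately, in the order linked, centred, decreasing, and to reduce each to a lemma already proved.

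For \(\pcl(p,Y)=\omega\), I will take \(\mathcal B=\bigl\{\{p\}\cup L_n:n<\omega\bigr\}\) or, alternatively, the sets \(L_n\) themselves. Each \(L_n\) is a non-empty subset of the discrete part, so it is open; \(\{p\}\cup L_n\) is open because \(L_n\in\FU\). Using just the \(L_n\), Lemma~\ref{lem:star-filter} says the traces form a linked family coinitial in \(\FU\), so Lemma~\ref{lem:filter-basics}(3) makes \(\{L_n:n<\omega\}\) a linked local \(\pi\)-base of cardinality \(\omega\). Since the invariant is defined as the least infinite cardinality of such a base, this gives \(\pcl(p,Y)=\omega\).

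For \(\pcc(p,Y)\geqslant\mathfrak p\), suppose \(\mathcal B\) is a centred local \(\pi\)-base with \(|\mathcal B|<\mathfrak p\). By Lemma~\ref{lem:traces-sfip}, the trace family \(\mathcal B\mathbin{|}E\) is coinitial in \(\FU\) and has the strong finite-intersection property. It lives on the countable set \(E\) and has cardinality \(<\mathfrak p\), so by the definition of \(\mathfrak p\) it has an infinite pseudointersection \(P\). (If the family were finite, Lemma~\ref{lem:countable-pseudo} covers this case as well.) Coinitiality then makes \(P\) a pseudointersection of \(\FU\): every \(A\in\FU\) contains some trace \(T\), and \(P\modfin T\subseteq A\). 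This contradicts Lemma~\ref{lem:no-pseudo-star}. Since \(\mathfrak p\geqslant\omega_1>\omega\), the "least infinite" convention causes no trouble.

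For the decreasing part, suppose \(p\) has a decreasing local \(\pi\)-base under some convention. By Theorem~B all three conventions are equivalent, so there is an inclusion-chain local \(\pi\)-base, and \(\pcd(p,Y)\) is defined. The space \(Y\) is countable because \(E\) is, so Theorem~B gives \(\pcd(p,Y)=\omega\). Theorem~B also gives \(\pcc(p,Y)\leqslant\pcd(p,Y)=\omega\), which contradicts \(\pcc(p,Y)\geqslant\mathfrak p\geqslant\omega_1\) from Lemma~\ref{lem:countable-pseudo}. A more direct route is also available: a countable chain base is centred, so its traces have the strong finite-intersection property, and Lemma~\ref{lem:countable-pseudo} then yields a pseudointersection of \(\FU\), which is again a contradiction. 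I expect no step to be a real obstacle. The only delicate point is the transfer of pseudointersections through coinitiality in the centred step, which is routine.
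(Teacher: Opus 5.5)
Your proof is correct and follows the paper's argument essentially step for step: the stars give a countable linked base, the definition of \(\mathfrak p\) is applied to the trace family and combined with Lemma~\ref{lem:no-pseudo-star}, and Theorem~B on the countable space \(Y\) excludes decreasing bases. One aside is false, though harmless, and should be deleted: \(L_n\notin\FU\) because \(\sigma(L_n)=\{n\}\notin U\), so \(\{p\}\cup L_n\) is not open; your argument, however, uses only the sets \(L_n\) themselves, which are open as subsets of the discrete part.
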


\begin{proof}
Lemma~\ref{lem:star-filter} gives a countable linked local \(\pi\)-base, so
\(\pcl(p,Y)=\omega\).

Suppose that \(\mathcal C\) is a centred local \(\pi\)-base at \(p\) with
\(|\mathcal C|<\mathfrak p\).  By Lemma~\ref{lem:traces-sfip}, its trace
family on \(E\) is coinitial in \(\FU\), has the strong
finite-intersection property, and has cardinality below \(\mathfrak p\).
It therefore has an infinite pseudointersection \(P\).  If \(A\in\FU\),
coinitiality gives a trace \(T\subseteq A\), and
\(P\modfin T\subseteq A\).  Thus \(P\) is a pseudointersection of
\(\FU\), contradicting Lemma~\ref{lem:no-pseudo-star}.  Hence
\(\pcc(p,Y)\geqslant\mathfrak p\).

By Lemma~\ref{lem:countable-pseudo}, \(\mathfrak p\geqslant\omega_1\), so
\(\pcc(p,Y)>\omega\).  Since \(Y\) is countable, Theorem~B shows that no
decreasing local \(\pi\)-base can exist at \(p\).
\end{proof}

\begin{remark}\label{rem:why-matrix}
The estimate by \(\mathfrak p\) explains the second component of the final
construction.  The value of \(\mathfrak p\) is not fixed by ZFC and may
exceed \(\omega_1\); see \cite{Blass}.  Hence the star space cannot by
itself have centred local \(\pi\)-character exactly \(\omega_1\) in every
model of ZFC.  It is used only to supply the linked base and the obstruction
to decreasing bases, while the matrix component fixes the exact centred
value.
\end{remark}

\section{The matrix filter}\label{sec:matrix}

Each member of the set \(D\) defined below has a dual role: it is both a
point of the discrete part of a one-point filter space and a subset of the
auxiliary set \(S\).  Intersections such as \(K_P=\bigcap_{d\in T_P}d\)
use the latter interpretation.

Let
\[
        S=\omega_1\times\omega_2.
\]
For \(\alpha<\omega_2\) and \(F\in[\omega_1]^{<\omega}\), define
\[
 d_{\alpha,F}
   =(\omega_1\times\alpha)\cup(F\times\omega_2),
 \qquad
 D=\{d_{\alpha,F}:\alpha<\omega_2,\ F\in[\omega_1]^{<\omega}\}.
\]

\begin{remark}\label{rem:unique-matrix}
The representation of a member of \(D\) is unique.  Indeed,
\[
 F=\bigl\{\xi<\omega_1:
       \{\xi\}\times\omega_2\subseteq d_{\alpha,F}\bigr\}.
\]
Once \(F\) is known, choose \(\xi\in\omega_1\setminus F\), which is
possible because \(F\) is finite and \(\omega_1\) is uncountable; the
\(\xi\)-section of \(d_{\alpha,F}\) is exactly the ordinal \(\alpha\).
Thus \(d_{\alpha,F}=d_{\beta,H}\) implies \(F=H\) and \(\alpha=\beta\).
In particular,
\[
        |D|=\omega_2.
\]
\end{remark}

For finite \(e\subseteq S\), put
\[
        G_e=\{d\in D:e\subseteq d\},
\]
and let
\[
        \G=\{A\subseteq D:G_e\subseteq A
              \text{ for some }e\in[S]^{<\omega}\}.
\]

\begin{lemma}\label{lem:matrix-filter}
The family \(\G\) is a free filter on \(D\).
\end{lemma}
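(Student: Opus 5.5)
The plan is to check the filter axioms directly from the identity
\[
        G_e\cap G_{e'}=G_{e\cup e'},
\]
which holds because \(e\subseteq d\) and \(e'\subseteq d\) together are equivalent to \(e\cup e'\subseteq d\).

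\emph{Filter axioms.} Upward closure is built into the definition of \(\G\). If \(A,A'\in\G\) with \(G_e\subseteq A\) and \(G_{e'}\subseteq A'\), then \(G_{e\cup e'}\subseteq A\cap A'\). Since \(e\cup e'\) is finite, \(\G\) is closed under finite intersections. Also \(D=G_\varnothing\in\G\).

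\emph{Properness.} It suffices to show that every \(G_e\) is non-empty, because then \(\varnothing\) contains no \(G_e\). Given a finite \(e\subseteq S=\omega_1\times\omega_2\), let \(F\) be the finite set of first coordinates of points of \(e\). Then \(e\subseteq F\times\omega_2\subseteq d_{0,F}\), so \(d_{0,F}\in G_e\). Alternatively, one may choose \(\alpha<\omega_2\) above all second coordinates, using the regularity of \(\omega_2\) (or simply the finiteness of \(e\)), and then \(d_{\alpha,\varnothing}\in G_e\).

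\emph{Freeness.} This is the only step needing a small idea: given \(d=d_{\alpha,F}\in D\), we must find a finite \(e\) with \(d\notin G_e\), i.e.\ \(e\not\subseteq d\). Pick \(\xi\in\omega_1\setminus F\), which exists since \(F\) is finite (as in Remark~\ref{rem:unique-matrix}), and put \(e=\{(\xi,\alpha)\}\). The point \((\xi,\alpha)\) is not in \(\omega_1\times\alpha\), because its second coordinate is \(\alpha\), and it is not in \(F\times\omega_2\), because \(\xi\notin F\). Hence \((\xi,\alpha)\notin d\), so \(G_e\in\G\) omits \(d\), and therefore \(\bigcap\G=\varnothing\).

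I expect no real obstacle here. The main point to get right is choosing the witnessing pair \((\xi,\alpha)\) using the uniqueness-style argument of Remark~\ref{rem:unique-matrix}.
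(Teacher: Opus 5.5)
Your proof is correct and follows essentially the same route as the paper. Both combine the identity \(G_e\cap G_{e'}=G_{e\cup e'}\) with non-emptiness of each \(G_e\) to get a proper filter, and both obtain freeness from a point \((\xi,\beta)\) whose column \(\xi\) avoids the finite column set and whose height \(\beta\) is at least \(\alpha\). The differences are cosmetic: the paper treats an arbitrary finite \(a\subseteq D\) at once, showing \(D\setminus a\in\G\), before specializing to singletons, whereas you handle a single \(d\) directly with \(\beta=\alpha\); and your main non-emptiness witness \(d_{0,F}\) works just as well as the paper's \(d_{\alpha,\varnothing}\).
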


\begin{proof}
For every finite \(e\subseteq S\), the set \(G_e\) is non-empty.  Choose
\(\alpha<\omega_2\) such that every second coordinate occurring in \(e\)
is strictly below \(\alpha\) (and take \(\alpha=0\) when \(e=\varnothing\));
then \(d_{\alpha,\varnothing}\in G_e\).  Moreover,
\[
        G_{e_0}\cap G_{e_1}=G_{e_0\cup e_1},
\]
so the sets \(G_e\) generate a proper filter.

To prove freeness, let
\[
        a=\{d_{\alpha_j,F_j}:j<k\}\subseteq D
\]
be finite; the case \(k=0\) is immediate.  The notation is unambiguous by
Remark~\ref{rem:unique-matrix}.  Choose
\(\xi\in\omega_1\setminus\bigcup_{j<k}F_j\) and
\(\beta<\omega_2\) with \(\beta\geqslant\alpha_j\) for every \(j<k\).
Then \((\xi,\beta)\notin d_{\alpha_j,F_j}\) for every \(j<k\), whence
\[
        G_{\{(\xi,\beta)\}}\subseteq D\setminus a.
\]
Thus \(D\setminus a\in\G\).  Taking \(a=\{d\}\) successively for
\(d\in D\), we obtain
\[
 \bigcap\G\subseteq\bigcap_{d\in D}(D\setminus\{d\})=\varnothing.
\]
Hence \(\G\) is free.
\end{proof}

Let \(Z=X(\G)\), and denote its non-isolated point by \(q\).

\begin{lemma}\label{lem:matrix-centred}
One has
\[
        \pc(q,Z)=\pcl(q,Z)=\pcc(q,Z)=\omega_1.
\]
In particular, the lower bound holds for arbitrary local \(\pi\)-bases, not
only for linked or centred ones.
\end{lemma}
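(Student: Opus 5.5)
The plan is to prove the upper bound \(\pcc(q,Z)\leqslant\omega_1\) with an explicit centred family, and the lower bound \(\pc(q,Z)\geqslant\omega_1\) by diagonalising against an arbitrary countable family of non-empty open sets. Since \(\pc(q,Z)\leqslant\pcl(q,Z)\leqslant\pcc(q,Z)\), these two bounds give the three equalities. Throughout, Lemma~\ref{lem:filter-basics}(3) reduces locality at \(q\) to coinitiality of the trace family in \(\G\).

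\emph{Upper bound.} For \(F\in[\omega_1]^{<\omega}\), put
\[
 T_F=\{d_{\alpha,H}\in D:\alpha<\omega_2,\ F\subseteq H\},
\]
which is well defined by Remark~\ref{rem:unique-matrix}. Each \(T_F\) is a non-empty subset of the discrete part \(D\): it contains \(d_{0,F}\). Hence \(T_F\) is a non-empty open subset of \(Z\). The family \(\{T_F:F\in[\omega_1]^{<\omega}\}\) has cardinality \(\omega_1\). It is centred, because
\[
 T_{F_0}\cap\dots\cap T_{F_k}=T_{F_0\cup\dots\cup F_k}\neq\varnothing.
\]
For coinitiality, take a finite \(e\subseteq S\) and let \(F\) be the set of first coordinates of points of \(e\). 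Then \(e\subseteq F\times\omega_2\subseteq d_{\alpha,H}\) whenever \(F\subseteq H\), so \(T_F\subseteq G_e\). Every member of \(\G\) contains some \(G_e\), so the traces are coinitial in \(\G\). This gives \(\pcc(q,Z)\leqslant\omega_1\).

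\emph{Lower bound.} Let \(\{B_n:n<\omega\}\) be any countable family of non-empty open subsets of \(Z\); finite families are included by repetition. By Lemma~\ref{lem:filter-basics}(3), each trace \(B_n\cap D\) is non-empty, so we may pick \(d_{\alpha_n,F_n}\in B_n\cap D\).
\begin{itemize}
\item The set \(\bigcup_n F_n\) is countable, so there is \(\xi\in\omega_1\setminus\bigcup_n F_n\).
\item Since \(\operatorname{cf}(\omega_2)>\omega\), there is \(\beta<\omega_2\) with \(\beta\geqslant\alpha_n\) for all \(n\).
\end{itemize}
Then \((\xi,\beta)\notin d_{\alpha_n,F_n}\) for every \(n\), exactly as in the freeness argument of Lemma~\ref{lem:matrix-filter}. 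Hence \(d_{\alpha_n,F_n}\notin G_{\{(\xi,\beta)\}}\), so no trace \(B_n\cap D\) is contained in \(G_{\{(\xi,\beta)\}}\in\G\). Thus the family is not a local \(\pi\)-base at \(q\), and \(\pc(q,Z)\geqslant\omega_1\).

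The only step needing care is the lower bound, and its key point is the cofinality of \(\omega_2\), which lets the countably many witnesses \(\alpha_n\) be bounded by a single \(\beta\). The upper bound is a direct verification.
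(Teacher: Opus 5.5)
Your proposal is correct and is essentially the paper's proof. It uses the same centred family (your \(T_F\) is the paper's \(B_F\)) with the same coinitiality check via the first coordinates of \(e\). The lower bound is the same diagonalisation, choosing \(\xi\) outside the countably many \(F_n\) and \(\beta\) above the countably many \(\alpha_n\). The only cosmetic difference is that you get non-emptiness of the traces and the failure of locality from Lemma~\ref{lem:filter-basics}(3), whereas the paper argues directly with the neighbourhood \(\{q\}\cup G_{\{(\xi,\beta)\}}\).
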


\begin{proof}
For \(F\in[\omega_1]^{<\omega}\), put
\[
        B_F=\{d_{\alpha,H}\in D:F\subseteq H\}.
\]
This is well defined by Remark~\ref{rem:unique-matrix}.  The family
\(\mathcal B=\{B_F:F\in[\omega_1]^{<\omega}\}\) is centred, since
\[
        B_{F_0}\cap\cdots\cap B_{F_{n-1}}
        =B_{F_0\cup\cdots\cup F_{n-1}}\neq\varnothing.
\]
It is coinitial in \(\G\).  Indeed, if \(A\in\G\), choose finite
\(e\subseteq S\) with \(G_e\subseteq A\), and let \(F_e\) be the finite set
of first coordinates occurring in \(e\).  Then
\[
        B_{F_e}\subseteq G_e\subseteq A.
\]
Consequently, \(\pcc(q,Z)\leqslant\omega_1\).

It remains to rule out smaller local \(\pi\)-bases.  Let \(\mathcal P\) be
a non-empty family of non-empty open subsets of \(Z\) with
\(|\mathcal P|<\omega_1\).  Every \(P\in\mathcal P\) meets \(D\): this is
clear if \(q\notin P\), while if \(q\in P\), then \(P\) contains a basic
neighbourhood \(\{q\}\cup G_e\), and \(G_e\neq\varnothing\) by
Lemma~\ref{lem:matrix-filter}.  Choose
\(d_{\alpha_P,F_P}\in P\cap D\) for every \(P\in\mathcal P\).

The set \(\bigcup_{P\in\mathcal P}F_P\) is countable, so choose
\[
        \xi\in\omega_1\setminus\bigcup_{P\in\mathcal P}F_P.
\]
Since \(\mathcal P\) is countable and
\(\operatorname{cf}(\omega_2)>\omega\), choose \(\beta<\omega_2\) with
\(\beta\geqslant\alpha_P\) for every
\(P\in\mathcal P\).  Then
\((\xi,\beta)\notin d_{\alpha_P,F_P}\) for every \(P\), and hence no member
of \(\mathcal P\) is contained in the neighbourhood
\[
        \{q\}\cup G_{\{(\xi,\beta)\}}.
\]
Thus \(\mathcal P\) is not a local \(\pi\)-base.  Therefore
\(\pc(q,Z)\geqslant\omega_1\), and the displayed equalities follow.
\end{proof}

\begin{lemma}\label{lem:matrix-decreasing}
One has
\[
        \pcd(q,Z)=\omega_2.
\]
More precisely, every inclusion-chain which is a local \(\pi\)-base at
\(q\) has cardinality at least \(\omega_2\).
\end{lemma}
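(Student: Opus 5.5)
Two things must be shown: an upper bound \(\pcd(q,Z)\leqslant\omega_2\), via an explicit chain of size \(\omega_2\), and a lower bound saying every inclusion-chain that is a local \(\pi\)-base at \(q\) has size at least \(\omega_2\).

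\emph{Upper bound.} For \(\alpha<\omega_2\) put \(C_\alpha=\{d_{\beta,\varnothing}:\beta\geqslant\alpha\}\). This family is decreasing in \(\alpha\), and each \(C_\alpha\) is a non-empty open subset of \(D\). It is coinitial in \(\G\). Indeed, given \(G_e\), choose \(\alpha\) strictly above every second coordinate occurring in \(e\). If \(\beta\geqslant\alpha\), then \(\omega_1\times\beta\) contains \(e\), so \(d_{\beta,\varnothing}\in G_e\). Hence \(C_\alpha\subseteq G_e\), and Lemma~\ref{lem:filter-basics}(3) makes \(\{C_\alpha\}\) a decreasing local \(\pi\)-base of cardinality \(\omega_2\).

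\emph{Lower bound.} Let \(\mathcal C\) be an inclusion-chain local \(\pi\)-base at \(q\). By Lemma~\ref{lem:matrix-centred} it is uncountable, so assume towards a contradiction that \(|\mathcal C|=\omega_1\). By Lemma~\ref{lem:filter-basics}(3) the traces \(C\cap D\) form a chain coinitial in \(\G\), and each trace is non-empty.

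The key step is to find a single \(\beta<\omega_2\) such that every trace contains some \(d_{\alpha,F}\) with \(\alpha<\beta\). Since \(\omega_1<\operatorname{cf}(\omega_2)\), it suffices to choose, for each \(C\in\mathcal C\), one witness \(d_{\alpha_C,F_C}\in C\cap D\) and take \(\beta\) above all \(\alpha_C\).

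It remains to contradict coinitiality. Consider the sets \(G_{\{(\xi,\beta)\}}\) for \(\xi<\omega_1\). Each must contain some trace \(T_\xi\), and \(T_\xi\) contains a witness \(d_{\alpha,F}\) with \(\alpha<\beta\). Membership of this witness in \(G_{\{(\xi,\beta)\}}\) forces \(\xi\in F\).

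Now use the chain structure, which is the main obstacle. Since the traces are linearly ordered and \(\omega_1\) is regular, either some single trace \(T\) serves as \(T_\xi\) for uncountably many \(\xi\), or the \(T_\xi\) are cofinal-downward. In the first case the witness for \(T\) must have \(F\) containing uncountably many \(\xi\), which is impossible since \(F\) is finite. The second case needs more care, so the witnesses should be chosen more cleverly. For each \(C\), take \(\alpha_C\) to be the least \(\alpha\) such that \(C\cap D\) contains some \(d_{\alpha,F}\). If \(C'\subseteq C\), then \(\alpha_{C'}\geqslant\alpha_C\), so \(\alpha_C\) is monotone along the chain, and \(\beta=\sup_C\alpha_C+1<\omega_2\).

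Then \(G_{\{(\xi,\beta)\}}\) contains a trace \(T\) together with the witness \(d_{\alpha_T,F}\in T\). Here \(\alpha_T<\beta\), so \((\xi,\beta)\notin\omega_1\times\alpha_T\), and hence \(\xi\in F\). This argument also works for \(G_{\{(\xi,\beta)\}}\) with \(\xi\) chosen outside \(F\), provided the witness is fixed before \(\xi\) is chosen. That cannot be done uniformly for arbitrary traces, which is exactly why the chain is needed.

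The plan to finish is to use \(G_{\{(\xi,\beta),(\eta,\beta)\}}\)-type sets and a pigeonhole over \(\omega_1\) many \(\xi\) with fixed finite \(F\)-sizes. The expected outcome is a single trace all of whose elements with second index below \(\beta\) have \(F\supseteq\) an infinite set, a contradiction. This final combinatorial step is where I expect the real difficulty. Once \(|\mathcal C|\geqslant\omega_2\) is established, combining it with the upper bound yields \(\pcd(q,Z)=\omega_2\).
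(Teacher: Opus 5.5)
\textbf{There is a genuine gap: the lower bound is not proved.} Your upper bound is fine. Using only the sets $d_{\beta,\varnothing}$ is a harmless variant of the paper's $C_\alpha$.

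For the lower bound, you explicitly leave the ``cofinal-downward'' case open. The proposed pigeonhole over sets $G_{\{(\xi,\beta),(\eta,\beta)\}}$ is only a hope: nothing shows it produces a single trace whose witnesses have an infinite $F$. Choosing $\alpha_C$ as a minimum is a red herring, since monotonicity of $\alpha_C$ plays no role. What is missing is to use the chain through a finite set attached \emph{intrinsically} to each trace, not to the witnesses.

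Keep your witnesses $d_{\alpha_C,F_C}\in C\cap D$ and your $\beta\geqslant\alpha_C$, and put $\Xi_C=\{\xi<\omega_1: C\cap D\subseteq G_{\{(\xi,\beta)\}}\}$.
\begin{itemize}
\item You already showed $\Xi_C\subseteq F_C$, so $\Xi_C$ is finite.
\item If $C'\subseteq C$, then $\Xi_C\subseteq\Xi_{C'}$. Hence $\{\Xi_C:C\in\mathcal C\}$ is a chain of finite sets.
\item Such a chain has at most one member of each finite cardinality, so its union is countable.
\item Coinitiality applied to each $G_{\{(\xi,\beta)\}}$ puts every $\xi<\omega_1$ into some $\Xi_C$, so the union is all of $\omega_1$.
\end{itemize}
This contradiction closes the proof. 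It uses only $|\mathcal C|<\omega_2$, which is needed to get $\beta<\omega_2$. You do not need Lemma~\ref{lem:matrix-centred} or the case split.

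The paper makes the same key observation but chooses things in the opposite order. It takes the full columns $F^*_P$ of $K_P=\bigcap_{d\in T_P}d$, which are finite and form a chain. It picks $\xi$ outside their countable union. Only then does it pick, for each $P$, a witness whose $F$ omits $\xi$, and set $\beta=\sup_P\alpha_P$. Then $(\xi,\beta)$ lies in no $K_P$, contradicting $S=\bigcup_P K_P$. Your remark that $\xi$ ``cannot be chosen uniformly'' is exactly the difficulty this chain of finite sets removes.
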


\begin{proof}
For \(\alpha<\omega_2\), let
\[
        C_\alpha=\{d_{\beta,F}\in D:\beta\geqslant\alpha\}.
\]
The family \(\langle C_\alpha:\alpha<\omega_2\rangle\) is decreasing and
coinitial in \(\G\).  Indeed, if \(A\in\G\), choose finite \(e\subseteq S\)
with \(G_e\subseteq A\), and choose \(\alpha\) strictly larger than every
second coordinate occurring in \(e\).  Then
\(C_\alpha\subseteq G_e\subseteq A\).  Hence
\(\pcd(q,Z)\leqslant\omega_2\).

Suppose that \(\mathcal P\) is an inclusion-chain which is a local
\(\pi\)-base at \(q\) and satisfies \(|\mathcal P|<\omega_2\).  For
\(P\in\mathcal P\), set
\[
        T_P=P\cap D,
        \qquad
        K_P=\bigcap_{d\in T_P}d\subseteq S.
\]
Every \(T_P\) is non-empty: if \(q\notin P\), then
\(P=T_P\neq\varnothing\); if \(q\in P\), openness gives a basic
neighbourhood \(\{q\}\cup G_e\subseteq P\), and
\(G_e\neq\varnothing\).  Locality gives
\begin{equation}\label{eq:matrix-cover}
        S=\bigcup_{P\in\mathcal P}K_P.
\end{equation}
Indeed, for \(s\in S\), choose \(P\in\mathcal P\) with
\(P\subseteq\{q\}\cup G_{\{s\}}\).  Then every member of \(T_P\) contains
\(s\), so \(s\in K_P\).

For \(P\in\mathcal P\), define the set of full columns of \(K_P\) by
\[
        F_P^*=\{\xi<\omega_1:\{\xi\}\times\omega_2\subseteq K_P\}.
\]
Each \(F_P^*\) is finite: choosing any
\(d_{\alpha,F}\in T_P\), one has \(K_P\subseteq d_{\alpha,F}\), and only
columns indexed by \(F\) are full in \(d_{\alpha,F}\).  The map
\(P\mapsto F_P^*\) reverses inclusion.  Indeed, if \(P\subseteq P'\), then
\(T_P\subseteq T_{P'}\), whence \(K_P\supseteq K_{P'}\) and
\(F_P^*\supseteq F_{P'}^*\).  Thus
\(\{F_P^*:P\in\mathcal P\}\) is again a chain.  A chain of finite sets has
at most one distinct member of each finite cardinality, and therefore only
countably many distinct members.  Hence
\[
        F^*=\bigcup_{P\in\mathcal P}F_P^*
\]
is countable.

Choose \(\xi\in\omega_1\setminus F^*\).  For every
\(P\in\mathcal P\), there is some \(d_{\alpha_P,F_P}\in T_P\) with
\(\xi\notin F_P\); otherwise every member of \(T_P\) would contain the
whole \(\xi\)-th column, contrary to \(\xi\notin F_P^*\).  Therefore
\[
        K_P\cap(\{\xi\}\times\omega_2)
        \subseteq\{\xi\}\times\alpha_P.
\]
Since \(|\mathcal P|<\omega_2\) and \(\omega_2\) is regular,
\[
        \beta=\sup_{P\in\mathcal P}\alpha_P<\omega_2.
\]
Then \((\xi,\beta)\notin K_P\) for every \(P\in\mathcal P\), contradicting
\eqref{eq:matrix-cover}.  Thus no inclusion-chain local \(\pi\)-base of
cardinality below \(\omega_2\) exists.
\end{proof}

\section{The ZFC construction}

Fix a non-principal ultrafilter \(U\) on \(\omega\), and take disjoint
copies of the sets \(E\) and \(D\) from Sections~\ref{sec:star}
and~\ref{sec:matrix}.  Form the corresponding star filter \(\FU\) and
matrix filter \(\G\), and put
\[
        \Fsum=\FU\oplus\G,
        \qquad
        X=X(\Fsum)=(E\dcup D)\cup\{p\}.
\]
By Lemma~\ref{lem:sum-free}, \(\Fsum\) is a free filter.  Hence
Lemma~\ref{lem:filter-basics} shows that \(X\) is zero-dimensional and
Tychonoff.  Moreover, \(|X|=\omega_2\) by
Remark~\ref{rem:unique-matrix}, and \(p\) is its only non-isolated point.

\begin{proof}[Proof of Theorem~A]
The stars \(L_n\subseteq E\) form a countable linked local \(\pi\)-base at
\(p\) in \(X\).  Indeed, the \(E\)-trace of every neighbourhood of \(p\)
belongs to \(\FU\), and hence contains a star.  Thus
\[
        \pc(p,X)=\pcl(p,X)=\omega.
\]

The family \(\{B_F:F\in[\omega_1]^{<\omega}\}\) from
Lemma~\ref{lem:matrix-centred}, regarded as a family of subsets of \(D\), is
a centred local \(\pi\)-base at \(p\) in \(X\), because the
\(D\)-trace of every neighbourhood of \(p\) belongs to \(\G\).  Hence
\(\pcc(p,X)\leqslant\omega_1\).  If a countable centred local
\(\pi\)-base existed at \(p\), Lemma~\ref{lem:sum-projection} would project
it to a countable centred local \(\pi\)-base in either \(X(\FU)\) or
\(X(\G)\).  The first alternative contradicts
Proposition~\ref{prop:star-invariants}, because
\(\mathfrak p\geqslant\omega_1\), and the second contradicts
Lemma~\ref{lem:matrix-centred}.  Therefore
\[
        \pcc(p,X)=\omega_1.
\]

The family \(\langle C_\alpha:\alpha<\omega_2\rangle\) from
Lemma~\ref{lem:matrix-decreasing}, again regarded as a family of subsets of
\(D\), is a decreasing local \(\pi\)-base at \(p\), for the same
reason.  Thus
\(\pcd(p,X)\leqslant\omega_2\).  Suppose that an inclusion-chain local
\(\pi\)-base of cardinality below \(\omega_2\) existed at \(p\).  Such a
chain is centred, so Lemma~\ref{lem:sum-projection} would project it to a
decreasing local \(\pi\)-base in \(X(\FU)\) or \(X(\G)\).  The star
component admits no decreasing local \(\pi\)-base at all, by
Proposition~\ref{prop:star-invariants}; the matrix component admits none of
cardinality below \(\omega_2\), by Lemma~\ref{lem:matrix-decreasing}.  This
is impossible, and consequently
\[
        \pcd(p,X)=\omega_2.
\]

Every point \(x\neq p\) is isolated, and \(\{\{x\}\}\) is a linked,
centred, and decreasing local \(\pi\)-base at \(x\).  Thus all four
pointwise invariants at \(x\) are \(\omega\), and in particular
\(\pcd(x,X)\) is defined at every point.  The displayed equalities at \(p\)
therefore give
\[
 \pc(X)=\pcl(X)=\omega<\pcc(X)=\omega_1<\pcd(X)=\omega_2.\qedhere
\]
\end{proof}

\begin{remark}\label{rem:optimal-cardinality}
The cardinality in Theorem~A is optimal.  Indeed, Theorem~B gives
\[
        \pcd(x,W)\leqslant\max\{\omega,|W|\}
\]
for every \(x\in W\) at which \(\pcd(x,W)\) is defined.  Consequently, a
space satisfying \(\pcd(W)=\omega_2\) must have cardinality at least
\(\omega_2\), while the space constructed above has exactly that
cardinality.
\end{remark}

\begin{remark}\label{rem:higher-cardinals}
The matrix construction generalises verbatim.  Let \(\kappa<\lambda\) be
regular uncountable cardinals, replace \(S\) by \(\kappa\times\lambda\),
and put
\[
 d_{\alpha,F}=(\kappa\times\alpha)\cup(F\times\lambda)
 \qquad
 (\alpha<\lambda,\ F\in[\kappa]^{<\omega}).
\]
The resulting matrix component has ordinary, linked, and centred local
\(\pi\)-character \(\kappa\), and decreasing local \(\pi\)-character
\(\lambda\).  Its sum with the star component therefore gives
\[
 \pc(X)=\pcl(X)=\omega,
 \qquad \pcc(X)\leqslant\kappa,
 \qquad \pcd(X)=\lambda.
\]
The projection argument yields the reverse inequality
\(\pcc(X)\geqslant\kappa\) whenever \(\kappa\leqslant\mathfrak p\).
Without this hypothesis, the star projection is not excluded for families
of cardinality in \([\mathfrak p,\kappa)\).  Since ZFC proves
\(\omega_1\leqslant\mathfrak p\), but does not prove
\(\kappa\leqslant\mathfrak p\) for any fixed regular
\(\kappa>\omega_1\), the present method gives an unconditional exact
centred value only when \(\kappa=\omega_1\); see \cite{Blass}.
\end{remark}

\begin{question}\label{que:higher-cardinals}
Let \(\omega<\kappa<\lambda\) be regular cardinals.  Is there in ZFC a
Hausdorff space \(X\) such that
\[
 \pc(X)=\pcl(X)=\omega<\pcc(X)=\kappa<\pcd(X)=\lambda?
\]
\end{question}

\end{document}